\numberwithin{equation}{section}
\numberwithin{figure}{section}
\theoremstyle{plain}
\newtheorem{thm}{\protect\theoremname}[section]
\newlist{casenv}{enumerate}{4}
\setlist[casenv]{leftmargin=*,align=left,widest={iiii}}
\setlist[casenv,1]{label={{\itshape\ \casename} \arabic*.},ref=\arabic*}
\setlist[casenv,2]{label={{\itshape\ \casename} \roman*.},ref=\roman*}
\setlist[casenv,3]{label={{\itshape\ \casename\ \alph*.}},ref=\alph*}
\setlist[casenv,4]{label={{\itshape\ \casename} \arabic*.},ref=\arabic*}
\theoremstyle{plain}
\newtheorem{question}[thm]{\protect\questionname}
\theoremstyle{remark}
\newtheorem{acknowledgement}[thm]{\protect\acknowledgementname}
\theoremstyle{definition}
\providecommand{\acknowledgementname}{Acknowledgement}
\providecommand{\casename}{Case}
\providecommand{\questionname}{Question}
\providecommand{\theoremname}{Theorem}
\begin{document}
\title{A Note On Rainbow 4-Term Arithmetic Progression}
\author{Subhajit Jana}
\address{Department of Mathematics, Jadavpur University, Jadavpur-700032, West
Bengal, India.}
\email{suja12345@gmail.com}
\author{Pratulananda Das}
\address{Department of Mathematics, Jadavpur University, Jadavpur-700032, West
Bengal, India.}
\email{pratulananda@yahoo.co.in}
\begin{abstract}
Let $[n]=\{1,\,2,...,\,n\}$ be colored in k colors. A rainbow AP($k$)
in {[}n{]} is a k term arithmetic progression whose elements have
diferent colors. Conlon, Jungic and Radoicic \citep{key-10} had shown
that there exists an equinumerous $4$-coloring of $[4n]$ which happens
to be rainbow $AP(4)$ free, when n is even and subsequently Haghighi
and Nowbandegani \citep{key-8} shown that such a coloring of {[}4n{]}
also exists when $n>1$ is odd. Based on their construction, we shown
that a balanced $4$-coloring of $[n]$ ( i.e. size of each color
class is at least $\left\lfloor n/4\right\rfloor $ ) actually exists
for all natural number $n$. Further we established that for nonnegative
integers $k\geq3$ and $n>1$, every balanced k-coloring of $[kn+r]$
with $0\leq r<k-1$, contains a rainbow $AP(k)$ if and only if $k=3$.
In this paper we also have discussed about rainbow free equinumerous
$4$-coloring of $\mathbb{Z}_{n}$.
\end{abstract}

\maketitle

\section{Introduction}

The well known van der Waerden theorem in Ramsey theory states that,
for every natural numbers $k$ and $t$ and sufficiently large N,
every k-colouring of $[N]$ contains a monochromatic arithmetic progression
of length $t$. Motivated by this result, Radoicic had conjectured
that every equinumerous 3-colouring of $[3n]$ contains a 3-term rainbow
arithmetic progression, i.e., an arithmetic progression whose terms
are colored with distinct colors. 

In 1916, Schur \citep{key-12} had observed that for every $k$, if
$n$ is sufficiently large, then every $k$-coloring of $[n]:={1,...,n}$
contains a monochromatic solution of the equation $x+y=z$. More than
seven decades later, Alekseev and Savchev \citep{key-1} considered
what Bill Sands calls an un-Schur problem \citep{key-7}. They proved
that for every equinumerous 3-colouring of $[3n]$ (i.e., a coloring
in which different color classes have the same cardinality), the equation
$x+y=z$ has a solution with $x$, $y$ and $z$ belonging to different
color classes. Such solutions are now known as rainbow solutions.
E. and G. Szekeres subsequently asked the natural question as to whether
the condition of equal cardinalities for three color classes can be
weakened \citep{key-13}. Indeed, Sch\"{ }onheim \citep{key-11} was
able to show that for every 3-colouring of {[}n{]}, such that every
color class has cardinality greater than $n/4$, the equation $x+y=z$
has rainbow solutions. Moreover, he established that $n/4$ is optimal. 

Inspired by the un-Schur problem, Jungi\textasciiacute c et al. \citep{key-5}
sought a rainbow counterpart of van der Waerden\textquoteright s theorem.
Namely, the asked that given natural numbers $m$ and $k$, what conditions
on $k$-colorings of $[n]$ guarantee the existence of an $AP(m)$,
all of whose elements have distinct colors?

If every integer in $[n]$ is colored by the largest power of three
that divides it, then one immediately obtains a k-coloring of {[}n{]}
with $k\leq\left\lfloor log_{3}n\,+1\right\rfloor $ which does not
contain any rainbow AP(3). So, in views of Szemeredi\textquoteright s
theorem which states that a large cardinality in only one-color class
ensures the existence of a monochromatic AP($m$), one needs all color
classes to be \textquotedblleft large\textquotedblright{} to force
a rainbow $AP(m)$, where large means has positive upper density. 

In \citep{key-5}, it was proved that every 3-coloring of $[N]$ with
the upper density of each color class greater than $1/6$ yields a
rainbow $AP(3)$. Using some tools from additive number theory, they
later obtained similar (and stronger) results for 3-colorings of $\mathbb{Z}_{n}$
and $\mathbb{\mathbb{Z}}_{p}$, some of which were recently extended
by Conlon et al. \citep{key-3}. The more difficult interval case
was studied in \citep{key-10}, where it was shown that every equinumerous
3-coloring of $[3n]$ contains a rainbow $AP(3)$, that is, there
exist a rainbow solution of the equation $x+y=2z$. Finally, Axenovich
and Fon-Der-Flaass \citep{key-2} cleverly combined the previous methods
with some additional ideas to obtain a stronger result, conjectured
in \citep{key-5}.

What happens when there are more than 3 colors? Axenovich and Fon-Der-Flass
\citep{key-2} found an equinumerous $k$-coloring of $[2mk]$ which
contains no rainbow $AP(k)$, for every $k\ge5$. The most challenging
case seems to be $k=4$ . In 2007, Conlon et al. \citep{key-3} constructed
a rainbow free equinumerous 4-coloring of {[}4n{]}, whenever n is
even. In 2011, Haghighi, and Nowbandegani \citep{key-8} extended
the above result and proved that for every positive integer m, there
exists an equinumerous 4-coloring of {[}8m + 4{]} with no rainbow
$AP(4)$.

Based on their construction, we have been able to generalised the
above results for $k=4$, and established the following result and
the proof is presented in the next section.
\begin{thm}
For every positive integer $n\geq8$, there exists an equinumerous
$4$-coloring of $[n]$ with no rainbow $AP(4)$.
\end{thm}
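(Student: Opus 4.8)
The plan is to produce, for every admissible length, one explicit $4$-colouring and then to check by hand that it contains no rainbow $AP(4)$. Since the four colour classes of an equinumerous colouring must have the same size, we may write $n=4m$, so that the hypothesis $n\ge 8$ becomes $m\ge 2$. The colouring will be a single rule $c\colon[4m]\to\{1,2,3,4\}$ with $|c^{-1}(i)|=m$ for each $i$, obtained by recasting the two constructions of \citep{key-10} (for $n$ even) and \citep{key-8} (for $n$ odd) in a uniform way; the parity of $m$ will enter only through a small adjustment near the endpoints, which is exactly the feature that forced the earlier papers to treat the two parities separately. Equinumerosity is then immediate from the defining rule by a direct count, so the real work is to rule out rainbow progressions.

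The device that drives the verification is a parity invariant: I will arrange $c$ so that one fixed colour appears only at odd integers, one fixed colour only at even integers, and the other two colours are shared between the parities. Consequently the odd numbers carry only three of the four colours, and so do the even numbers. Now consider an arbitrary progression $a,a+d,a+2d,a+3d$ and split on the parity of $d$. When $d$ is even, all four terms share the parity of $a$ and therefore lie in a class that uses at most three colours; a rainbow $AP(4)$ needs four, so this case is settled at once.

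Everything therefore reduces to odd $d$, where the four terms alternate in parity, producing two odd terms and two even terms. For the progression to be rainbow, the odd-only colour must occupy one of the odd terms, the even-only colour one of the even terms, and the two shared colours must then fill the remaining two slots with distinct colours. The plan is to show that the explicit placement never permits this, by proving that in every such progression some two of the four terms are forced to share a colour --- either two terms of the same parity, such as those in one of the pairs $\{a,a+2d\}$ and $\{a+d,a+3d\}$, or a shared colour caught on terms of both parities. I expect this to be the main obstacle: since the two shared colours are constant only on certain blocks and switch at the internal boundaries of $c$, the proof needs a finite but careful case analysis organised by the residue of $d$ modulo a small integer together with the position of $a$ relative to those boundaries, carried out for each parity of $m$. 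Completing this analysis finishes the proof and recovers the constructions of \citep{key-10,key-8} as its two special cases.
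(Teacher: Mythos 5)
Your proposal has two genuine gaps, one of execution and one of scope. The execution gap: the proof never materializes. You do not write down the colouring rule (it is ``obtained by recasting the two constructions \ldots{} in a uniform way''), and the entire odd-$d$ analysis --- which you yourself flag as ``the main obstacle'' --- is deferred with ``Completing this analysis finishes the proof.'' For an existence theorem proved by explicit construction, exhibiting the colouring and completing the finite check \emph{is} the proof, and that check is where all the content lies. The paper does exactly this: it takes the block colouring $\mathcal{A}\cdots\mathcal{A}\,\mathcal{B}\cdots\mathcal{B}$ with $\mathcal{A}=ABCC$ and $\mathcal{B}=DDAB$, and for each colouring runs a case analysis organised by $d \bmod 4$ and by how the terms $t_1<t_2<t_3<t_4$ straddle the boundary between the $\mathcal{A}$-half and the $\mathcal{B}$-half, in every case forcing two terms to share a colour. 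To your credit, your parity invariant is in fact correct for this base construction: $A$ occurs only at odd positions, $B$ only at even ones, so odd positions carry only $\{A,C,D\}$ and even positions only $\{B,C,D\}$, and every progression with even $d$ is killed at once. That observation would halve the paper's case analysis, which instead grinds through $d\equiv 0,2 \pmod 4$ directly. But an unexecuted plan for the odd-$d$ half is not a proof, and the forcing there genuinely depends on the fine placement of the blocks, not just on parity.

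The scope gap is more damaging. Your opening reduction ``we may write $n=4m$'' misreads the theorem: the paper uses ``equinumerous'' loosely, and the abstract clarifies that the intended notion is \emph{balanced} (every colour class of size at least $\left\lfloor n/4\right\rfloor$, i.e., class sizes within one of each other), so the claim ranges over \emph{all} $n\geq 8$. The cases $n\equiv 0$ and $n\equiv 4 \pmod 8$ were already known from \citep{key-3} and \citep{key-8}; the entire content of this theorem is the remaining six residues modulo $8$, which the paper handles by appending or prepending short strings ($D$, $DB$, $DBA$, $C$, $CC$, $BCC$, $ABCC$) to the base colouring and arguing inductively: since the shorter colourings are already rainbow-$AP(4)$-free, any rainbow $AP(4)$ in the extended colouring must use the newly added endpoint ($t_4=n$ or $t_1=1$), which launches a short forcing argument in each residue class of $d$ modulo $4$. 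Your plan, by treating only the two parities of $m$ with $n=4m$ and explicitly aiming to recover the two prior constructions ``as its two special cases,'' would --- even if completed --- reprove only what was already in the literature and leave the theorem's assertion for $n\not\equiv 0 \pmod 4$ untouched.
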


As an outcome we have the following theorem, which in some sense finishes
the story of the existence of rainbow $AP(k)$ in equinumerous random
k-coloring of $[kn]$, $n>1$.
\begin{thm}
For positive integers $k\geq3$ and $n>1$, every equinumerous k-coloring
of $[kn+r]$ with $0\leq r\leq k-1$, contains a rainbow $AP(k)$
if and only if $k=3$. 
\end{thm}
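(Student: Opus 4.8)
The plan is to prove the two implications of the biconditional separately; since the standing hypothesis is $k\ge 3$, establishing that ``$k=3$ forces a rainbow $AP(k)$'' together with ``every $k\ge 4$ admits a rainbow-free balanced coloring'' disposes of both directions at once. For the forcing direction I would not reprove the $k=3$ case from scratch but quote the sharp density result for rainbow $AP(3)$: Axenovich and Fon-Der-Flaass \citep{key-2}, sharpening \citep{key-10}, proved that every $3$-coloring of $[N]$ whose color classes all exceed $(N+4)/6$ contains a rainbow $AP(3)$. In an equinumerous $3$-coloring of $[3n]$ (equinumerosity forces $r=0$, so the content really lives on $[3n]$) each class has size exactly $n$, and $n>(3n+4)/6$ is equivalent to $n\ge 2$; since $n>1$ the hypothesis always applies and a rainbow $AP(3)$ must occur. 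In the balanced reading, where up to $r\le 2$ spare points inflate some classes, each class still has at least $n$ elements and the same inequality $3n>r+4$ holds for all $n>1$ save a few small pairs $(n,r)$, which I would settle by a direct finite check.

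For the reverse direction I must exhibit, for every $k\ge 4$ and every $n>1$, a balanced (equinumerous when $r=0$) $k$-coloring of $[kn+r]$ with no rainbow $AP(k)$. The case $k=4$ is nothing but the preceding theorem, whose $4$-colorings of $[m]$ for all $m\ge 8$ cover every $[4n]$ with $n\ge 2$; the residue points are appended exactly as described below. For $k\ge 5$ I would take as a black box the Axenovich--Fon-Der-Flaass construction \citep{key-2}, already recorded in the introduction, of an equinumerous rainbow-free $k$-coloring of $[2mk]$. Setting $n=2m$, this settles all \emph{even} $n$ at once, so that only odd $n$ and the residue $r$ remain to be dealt with.

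The genuine difficulty, and essentially the only place where real work is needed, is the passage to \emph{odd} $n$ (and then to the residue $r$) when $k\ge 5$: the construction of \citep{key-2} is built on the factorization $[2mk]$ and so only delivers counterexamples of even length, precisely the limitation that forced Haghighi and Nowbandegani \citep{key-8} past the even-length coloring of \citep{key-3} in the case $k=4$. I would attack odd $n$ in the same spirit. Avoiding a rainbow $AP(k)$ amounts to arranging that every $k$-term progression repeats a color, and the progressions that are hard to control are those whose common difference is comparable to $n$, since their $k$ terms sweep once across the whole interval; it is exactly the boundary terms of such sweeps whose colors coincide or not according to the parity of $n$. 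Starting from a rainbow-free coloring of a nearby even length, I would adjust the coloring on $O(1)$ positions near the two ends of $[kn]$ so as to restore, for odd $n$, the color coincidences that evenness supplied for free, correcting the class sizes by local transpositions to keep the coloring balanced, and then re-verify only the bounded family of progressions that meet the altered positions. Finally the residue points $kn+1,\dots,kn+r$ are appended on the right and colored with shades already saturated in the terminal block, so that every $AP(k)$ reaching into the tail is either pushed off $[kn+r]$ or forced to reuse a color. Enumerating and checking these boundary families -- for odd $n$ and for the appended tail -- is where the substance of the argument lies.
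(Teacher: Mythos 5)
Your handling of the two easy cases matches the paper: $k=3$ is disposed of by citing Axenovich and Fon-Der-Flaass \citep{key-2} (the paper cites the same result, without your quantitative $(N+4)/6$ refinement), and $k=4$ is Theorem 1.1. The genuine gap is at $k\ge5$, which is exactly where your text stops being a proof and becomes a plan. The only actual colorings you produce there are the equinumerous rainbow-free $k$-colorings of $[2mk]$ from \citep{key-2}, i.e.\ even $n$ with $r=0$. For odd $n$ and for the residues $r$ you propose an unexecuted ``local surgery'': alter $O(1)$ boundary positions, rebalance by transpositions, and ``re-verify only the bounded family of progressions that meet the altered positions.'' That re-verification is not bounded in the way you suggest: an $AP(k)$ meeting an altered position has its other $k-1$ terms spread anywhere in $[kn+r]$, so the family of progressions to check grows with $n$, and the rebalancing transpositions can create rainbow progressions far from the boundary. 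You concede the point yourself when you write that enumerating these families ``is where the substance of the argument lies'' --- the substance is precisely what is missing, so the reverse direction is not established for any $k\ge5$ beyond even $n$, $r=0$.

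The idea you are missing is much simpler and is what the paper actually does: induct on $k$ by appending a monochromatic tail, with no appeal to \citep{key-2} at all. Given a rainbow-$AP(k-1)$-free equinumerous $(k-1)$-coloring of a suitable initial segment (for $k=5$: the coloring of $[4n+3]$ from Theorem 1.1 when $r=4$, or of $[4n+r]$ when $r<4$), color the remaining terminal block of $n$ or $n+1$ integers of $[kn+r]$ with the new $k$-th color. A rainbow $AP(k)$ must use the new color exactly once, and since that color occupies a terminal block, only the largest term can carry it; deleting that term leaves a rainbow $AP(k-1)$ lying entirely in the $(k-1)$-colored initial segment, contradicting the inductive hypothesis. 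This construction works uniformly for every $n>1$ and every $0\le r\le k-1$, keeps the coloring equinumerous, and eliminates the odd/even distinction that your approach founders on.
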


\begin{proof}
The case k = 3 has already been dealt in \citep{key-2}. For $k=4$,
by theorem 1.1, a rainbow $AP(4)$ free 4-coloring of $[4n+r]$ with
$0\leq r\leq3$ is at hand for every n \textgreater{} 1. To construct
a 5-coloring of $[5n+r]$ with $0\leq r\leq4$, we can start with
a equinumerous $4$-coloring of {[}4n+r-1{]} if $r=4$, which has
no rainbow $AP(4)$ and then color $\{4n+4,\ldots,5n+4\}$ with the
fifth color and if $r<4$, then we can use a equinumerous $4$-coloring
of$[4n+r]$, which has no rainbow $AP(4)$ and then color $\{4n+r+1,\ldots,5n+r\}$
with the fifth color. Evidently, this equinumerous 5-coloring has
no rainbow $AP(5)$. One can inductively use this construction to
provide an equinumerous k-coloring of $[kn+r]$ with $0\leq r\leq k-1$
for every $k>5$, $n>1$, with no rainbow $AP(k)$.
\end{proof}
Before we present our construction to establish Theorem1.1, It is
important to note that in this coloring there is a color which appears
on consecutive integers. An important step in establishing the existence
of a rainbow $AP(3)$ in every equinumerous $3$-coloring of {[}n{]}
is in establishing the fact that at least one of the colors is recessive,
i.e., it does not appear on consecutive integers. Therefore, a natural
way to possibly force the existence of a rainbow $AP(4)$ is to assume
that every color is recessive. This is our motivation for the second
construction, presented in Section 3, where we have shown that every
equinumerous $4$-coloring of $\mathbb{Z}_{8}$ consists of a rainbow
$AP(4)$.

\section{Proof of Theorem 1.1. }

Let $A,B,C$ and $D$ be our four colors and denote by $\mathcal{A}$
the block $ABCC$ and by $\mathcal{B}$ the block $DDAB$. 

The equinumerous 4-coloring of $[n]$ in the proof of Theorem 2 in
\citep{key-3} is rainbow AP(4) free, whenever $n=8m$ and is given
as follows: 

\label{(1)}
\[
\mathcal{\underbrace{A\ldots\mathcal{A}}\underbrace{\mathcal{B}\ldots\mathcal{B}}}
\]
 where both the blocks $\mathcal{A}$ and $\mathcal{B}$ apears $m$-times
consecutively.

Our Construction for $[n]$, whenever $n=8m+1$, $m\in\mathbb{N}$,
is as follows:

\label{(2)}
\[
\underbrace{\mathcal{A\ldots\mathcal{A}}}\underbrace{\mathcal{B\ldots\mathcal{B}}}D
\]
 where both the blocks $\mathcal{A}$ and $\mathcal{B}$ appear $m$-times
consecutively.

What remains is to check that this coloring of $[8m+1]$ is rainbow
$AP(4)$ free. 

To get a contradiction, let $t_{1}<t_{2}<t_{3}<t_{4}$ denote the
terms of a rainbow $AP(4)$ in (2) with common diference $d$. Obviously,
d \textgreater{} 1. Since (1) is rainbow $AP(4)$ free, we must have
$t_{4}=8m+1$. Since the left hand side (the frst $4m$ numbers) of
(2) is colored only by $A$, $B$ and $C$, and right hand side (
the last $4m+1$ numbers) of (2) is colored only by $A$, $B$ and
$D$. Therefore we must have$t_{1}\leq4m$. Now two cases can arise.
\begin{casenv}
\item First suppose $t_{1}<t_{2}\leq4m<t_{3}<t_{4}$. If $d\equiv0$ (mod
$4$), then $t_{3}$ and $t_{4}$ are both colored $D$. If $d\equiv1$
(mod $4$), then $t_{1}$ and $t_{3}$ are both colored $B$. If $d\equiv2$
(mod $4$), then $t_{2}$ and $t_{3}$ are both colored $A$. If $d\equiv3$
(mod $4$), then $t_{3}$ and $t_{4}$ are both colored $D$.
\item Next suppose $t_{1}\leq4m<t_{2}<t_{3}<t_{4}$. If $d\equiv0$ (mod
$4$), then $t_{3}$ and $t_{4}$ are both colored $D$. If $d\equiv1$
(mod $4$), then $t_{1}$ and $t_{3}$ are both colored $B$. If $d\equiv2$
(mod $4$), then $t_{2}$ and $t_{4}$ are both colored $D$. If $d\equiv3$
(mod $4$), then $t_{3}$ and $t_{4}$ are both colored $D$.
\end{casenv}
Construction for $[n]$, whenever $n=8m+2$, $m\in\mathbb{N}$, and
is given as follows:

\label{(3)}
\[
\underbrace{\mathcal{A\ldots\mathcal{A}}}\underbrace{\mathcal{B\ldots\mathcal{B}}}DB
\]
 where both the blocks $\mathcal{A}$ and $\mathcal{B}$ appear $m$-times
consecutively.

Now we need to check that this coloring of $[8m+2]$ is again rainbow
$AP(4)$ free. 

To get a contradiction, let $t_{1}<t_{2}<t_{3}<t_{4}$ denote the
terms of a rainbow $AP(4)$ in (3) with common diference $d$. Obviously,
$d>1$. Since (1), (2) are rainbow $AP(4)$ free, we must have $t_{4}=8m+2$.
Since the left hand side (the frst $4m$ numbers) of (3) is colored
only by $A$, $B$ and $C$, and right side ( the last $4m+2$ numbers)
of (3) is colored only by $A$, $B$ and $D$, therefore we must have
$t_{1}\leq4m$. Now, two cases can occur.
\begin{casenv}
\item First Let $t_{1}<t_{2}\leq4m<t_{3}<t_{4}$. If $d\equiv0$ (mod $4$),
then $t_{1}$ and $t_{2}$ are both colored $B$. If $d\equiv1$ (mod
$4$), then $t_{1}$ and $t_{2}$ are both colored $C$. If $d\equiv2$
(mod $4$), then $t_{2}$ and $t_{4}$ are both colored $B$. If $d\equiv3$
(mod $4$), then $t_{1}$ and $t_{3}$ are both colored $A$.
\item Next let $t_{1}\leq4m<t_{2}<t_{3}<t_{4}$. If $d\equiv0$ (mod $4$),
then $t_{2}$ and $t_{3}$ are both colored $D$. If $d\equiv1$ (mod
$4$), then $t_{2}$ and $t_{4}$ are both colored $B$. If $d\equiv2$
(mod $4$), then $t_{3}$ and $t_{4}$ are both colored $B$. If $d\equiv3$
(mod $4$), then $t_{2}$ and $t_{4}$ are both colored $B$.
\end{casenv}
Construction for $[n]$, whenever $n=8m+3$, $m\in\mathbb{N}$, and
is given as follows:

\label{(4)}
\[
\underbrace{\mathcal{A\ldots\mathcal{A}}}\underbrace{\mathcal{B\ldots\mathcal{B}}}DBA
\]
 where both the blocks $\mathcal{A}$ and $\mathcal{B}$ appear $m$-times
consecutively.

Now we need to check that this coloring of $[8m+3]$ is rainbow $AP(4)$
free. 

To get a contradiction, let $t_{1}<t_{2}<t_{3}<t_{4}$ denote the
terms of a rainbow $AP(4)$ in (4) with common diference $d$, $d>1$.
Since (1), (2), (3) are rainbow $AP(4)$ free, we must have $t_{4}=8m+3$.
Since the left part (the frst $4m$ numbers) of (4) is colored only
by $A$, $B$ and $C$, and right part ( the last $4m+3$ numbers)
of (4) is colored only by $A$, $B$ and $D$, therefore $t_{1}\leq4m$.
Now two cases can occur.
\begin{casenv}
\item We can have $t_{1}<t_{2}\leq4m<t_{3}<t_{4}$. If $d\equiv0$ (mod
$4$), then $t_{1}$ and $t_{2}$ are both colored $C$. If $d\equiv1$
(mod $4$), then $t_{2}$ and $t_{4}$ are both colored $A$. If $d\equiv2$
(mod $4$), then $t_{1}$ and $t_{4}$ are both colored $A$. If $d\equiv3$
(mod $4$), then $t_{1}$ and $t_{3}$ are both colored $B$.
\item Otherwise let, $t_{1}\leq4m<t_{2}<t_{3}<t_{4}$. If $d\equiv0$ (mod
$4$), then $t_{2}$ and $t_{3}$ are both colored $A$. If $d\equiv1$
(mod $4$), then $t_{2}$ and $t_{3}$ are both colored $D$. If $d\equiv2$
(mod $4$), then $t_{2}$ and $t_{4}$ are both colored $A$. If $d\equiv3$
(mod $4$), then $t_{1}$ and $t_{3}$ are both colored $B$.
\end{casenv}
One can easilly show that following colorings of $[n]$, whenever
$n=8m+3$, $m\in\mathbb{N}$:

\label{(4.1)}
\[
C\underbrace{\mathcal{A\ldots\mathcal{A}}}\underbrace{\mathcal{B\ldots\mathcal{B}}}DB
\]
 where both the blocks $\mathcal{A}$ and $\mathcal{B}$ appear $m$-times
consecutively is also rainbow $AP(4)$ free, whose proof is left to
the reader.

The equinumerous $4$-coloring of $[n]$ in the proof of Theorem 1.3
in \citep{key-8} which has been shown to be rainbow $AP(4)$ free,
whenever $n=8m+4$ is the following: 

\label{(*)}
\[
BAC\mathcal{\underbrace{A\ldots\mathcal{A}}\underbrace{\mathcal{B}\ldots\mathcal{B}}}D
\]
 where both the blocks $\mathcal{A}$ and $\mathcal{B}$ apears $m$-times
consecutively.

However we can also consider the following construction for $[n]$,
whenever $n=8m+4$, $m\in\mathbb{N}$:

\label{(5)}
\[
C\underbrace{\mathcal{A\ldots\mathcal{A}}}\underbrace{\mathcal{B\ldots\mathcal{B}}}DBA
\]
 where both the blocks $\mathcal{A}$ and $\mathcal{B}$ appear $m$-times
consecutively. 

This coloring of $[8m+4]$ also happens to be rainbow $AP(4)$ free. 

To get a contradiction, let $t_{1}<t_{2}<t_{3}<t_{4}$ denote the
terms of a rainbow $AP(4)$ in (5) with common diference $d$ with,
$d>1$. Since (1), (2), (3), (4) are rainbow $AP(4)$ free, we must
have $t_{1}=1$. Since the left side (the frst $4m+1$ numbers) of
(5) is colored only by $A$, $B$ and $C$, and right side ( the last
$4m+3$ numbers) of (5) is colored only by $A$, $B$ and $D$, therefore
$t_{4}>4m+1$. Now two cases can arise.
\begin{casenv}
\item First let, $t_{1}<t_{2}\leq4m+1<t_{3}<t_{4}$. If $d\equiv0$ (mod
$4$), then $t_{1}$ and $t_{2}$ are both colored $C$. If $d\equiv1$
(mod $4$), then $t_{2}$ and $t_{4}$ are both colored $A$. If $d\equiv2$
(mod $4$), then $t_{2}$ and $t_{3}$ are both colored $B$. If $d\equiv3$
(mod $4$), then $t_{1}$ and $t_{2}$ are both colored $C$.
\item Otherwise, $t_{1}<t_{2}<t_{3}\leq4m+1<t_{4}$. If $d\equiv0$ (mod
$4$), then $t_{1}$ and $t_{2}$ are both colored $C$. If $d\equiv1$
(mod $4$), then $t_{2}$ and $t_{4}$ are both colored $A$. If $d\equiv2$
(mod $4$), then $t_{1}$ and $t_{3}$ are both colored $C$. If $d\equiv3$
(mod $4$), then $t_{1}$ and $t_{2}$ are both colored $C$.
\end{casenv}
We now come to the construction for $[n]$, whenever $n=8m+5$, $m\in\mathbb{N}$,
which can be taken as:

\label{(6)}
\[
CC\underbrace{\mathcal{A\ldots\mathcal{A}}}\underbrace{\mathcal{B\ldots\mathcal{B}}}DBA
\]
 where both the blocks $\mathcal{A}$ and $\mathcal{B}$ appear $m$-times
consecutively. 

What remains is to check that this coloring of $[8m+5]$ is rainbow
$AP(4)$ free. 

To get a contradiction, let $t_{1}<t_{2}<t_{3}<t_{4}$ denote the
terms of a rainbow $AP(4)$ in (6) with common diference $d$. Obviously,
d \textgreater{} 1. Since (1), (2), (3), (4), (5) are rainbow $AP(4)$
free, we must have $t_{1}=1$. Since the left part (the first $4m+2$
numbers) of (6) is colored only by $A$, $B$ and $C$, and right
part ( the last $4m+3$ numbers) of (6) is colored only by $A$, $B$
and $D$, therefore $t_{4}>4m+2$. Again two cases can occur.
\begin{casenv}
\item First assume, $t_{1}<t_{2}\leq4m+2<t_{3}<t_{4}$. If $d\equiv0$ (mod
$4$) or $d\equiv1$ (mod $4$), then $t_{1}$ and $t_{2}$ are both
colored $C$. If $d\equiv2$ (mod $4$), then $t_{2}$ and $t_{3}$
are both colored $A$. If $d\equiv3$ (mod $4$), then $t_{2}$ and
$t_{4}$ are both colored $B$.
\item Next assume, $t_{1}<t_{2}<t_{3}\leq4m+2<t_{4}$. If $d\equiv0$ (mod
$4$) or $d\equiv1$ (mod $4$), then $t_{1}$ and $t_{2}$ are both
colored $C$. If $d\equiv2$ (mod $4$), then $t_{1}$ and $t_{3}$
are both colored $C$. If $d\equiv3$ (mod $4$), then $t_{2}$ and
$t_{4}$ are both colored $B$.
\end{casenv}
Construction for $[n]$, whenever $n=8m+6$, $m\in\mathbb{N}$, can
be given as follows:

\label{(7)}
\[
BCC\underbrace{\mathcal{A\ldots\mathcal{A}}}\underbrace{\mathcal{B\ldots\mathcal{B}}}DBA
\]
 where both the blocks $\mathcal{A}$ and $\mathcal{B}$ appear $m$-times
consecutively. 

This coloring of $[8m+6]$ also happens to be rainbow $AP(4)$ free. 

To get a contradiction, let $t_{1}<t_{2}<t_{3}<t_{4}$ denote the
terms of a rainbow $AP(4)$ in (7) with common diference $d$. Obviously,
d \textgreater{} 1. Since (1), (2), (3), (4), (5), (6) are rainbow
AP(4) free, we must have $t_{1}=1$. Since the left side (the frst
$4m+3$ numbers) of (7) is colored only by $A$, $B$ and $C$, and
right side ( the last $4m+3$ numbers) of (7) is colored only by $A$,
$B$ and $D$, therefore $t_{4}>4m+3$. Now two cases can arise.
\begin{casenv}
\item Let, $t_{1}<t_{2}\leq4m+3<t_{3}<t_{4}$. If $d\equiv0$ (mod $4$),
then $t_{1}$ and $t_{2}$ are both colored $B$. If $d\equiv1$ (mod
$4$), then $t_{1}$ and $t_{3}$ are both colored $B$. If $d\equiv2$
(mod $4$), then $t_{1}$ and $t_{4}$ are both colored $B$. If $d\equiv3$
(mod $4$), then $t_{2}$ and $t_{3}$ are both colored $B$.
\item Next let, $t_{1}<t_{2}<t_{3}\leq4m+3<t_{4}$. If $d\equiv0$ (mod
$4$), then $t_{1}$ and $t_{2}$ are both colored $B$. If $d\equiv1$
(mod $4$), then $t_{2}$ and $t_{3}$ are both colored $C$. If $d\equiv2$
(mod $4$), then $t_{1}$ and $t_{3}$ are both colored $A$. If $d\equiv3$
(mod $4$), then $t_{1}$ and $t_{3}$ are both colored $A$.
\end{casenv}
Finally we take the Construction for $[n]$, whenever $n=8m+7$, $m\in\mathbb{N}$
as follows:

\label{(8)}
\[
ABCC\underbrace{\mathcal{A\ldots\mathcal{A}}}\underbrace{\mathcal{B\ldots\mathcal{B}}}DBA
\]
 where both the blocks $\mathcal{A}$ and $\mathcal{B}$ appear $m$-times
consecutively. 

To finish the proof, we claim that this coloring of $[8m+7]$ is rainbow
$AP(4)$ free. 

To get a contradiction, let $t_{1}<t_{2}<t_{3}<t_{4}$ denote the
terms of a rainbow $AP(4)$ in (8) with common diference $d$. Obviously,
d \textgreater{} 1. Since (1), (2), (3), (4), (5), (6), (7) are rainbow
$AP(4)$ free, we must have $t_{1}=1$. Since the left side (the frst
$4m+4$ numbers) of (8) is colored only by $A$, $B$ and $C$, and
right side ( the last $4m+3$ numbers) of (8) is colored only by $A$,
$B$ and $D$, therefore $t_{4}>4m+3$. Now two cases can occur.
\begin{casenv}
\item First let, $t_{1}<t_{2}\leq4m+3<t_{3}<t_{4}$. If $d\equiv0$ (mod
$4$), then $t_{1}$ and $t_{2}$ are both colored $A$. If $d\equiv1$
(mod $4$), then $t_{1}$ and $t_{3}$ are both colored $A$. If $d\equiv2$
(mod $4$), then $t_{1}$ and $t_{4}$ are both colored $A$. If $d\equiv3$
(mod $4$), then $t_{1}$ and $t_{3}$ are both colored $A$.
\item Otherwise we have, $t_{1}<t_{2}<t_{3}\leq4m+3<t_{4}$. If $d\equiv0$
(mod $4$), then $t_{1}$ and $t_{2}$ are both colored $A$. If $d\equiv1$
(mod $4$), then $t_{2}$ and $t_{4}$ are both colored $B$. If $d\equiv2$
(mod $4$), then $t_{1}$ and $t_{3}$ are both colored $A$. If $d\equiv3$
(mod $4$), then $t_{2}$ and $t_{3}$ are both colored $A$.
\end{casenv}

\section{Nature Of Equinumerous 4-colorings of Certain $\mathbb{Z}_{n}$}

Because arithmetic progressions may \textquotedblleft wrap around\textquotedblright{}
in the cyclic group $\mathbb{Z}_{n}$, it is only natural if we consider
only $k$-APs that include $k$ distinct members of $\mathbb{Z}_{n}$.
An interesting fact is that each arithmetic progression in $[n]$
is always an arithmetic progression in $\mathbb{Z}_{n}$, and it has
much more APs of same length. That's why the study is meaningful.

Some useful notation: given a 4-coloring $\nu=(\nu(0),\nu(1),...,\nu(n-1))\in\{A,B,C,D\}^{k}$
of $\mathbb{Z}_{k}$, let $\overline{\nu}$ denote the $4$-coloring
of $\mathbb{N}$ such that for every $i\in\mathbb{N}$, $\overline{\nu}(i)=\nu(i\ (mod\ k))$.

To construct a rainbow free equinumerous 4-coloring of $\mathbb{Z}_{24k}$,
$k\in\mathbb{N}$, Colnon et al.( in section 3, \citep{key-3}) have
considered the following 4-coloring $\mu$ of $[n]$, $n=24m$, $m\in\mathbb{N}$:
for $i\in[n]$, let 
\[
\mu(i):=\begin{cases}
A & i\equiv3,6,9,16,18,20\ (mod\ 24)\\
B & i\equiv1,8,10,12,19,22\ (mod\ 24)\\
C & i\equiv5,7,13,15,21,23\ (mod\ 24)\\
D & i\equiv0,2,4,11,14,17\ (mod\ 24)
\end{cases}
\]

In other words, $\mu([n])$ is a prefix of $\nu(\mathbb{N})$ of length
n, where $\nu$ denotes the 4-coloring of $\mathbb{Z}_{24}$ given
by: 
\[
\nu:=(B,D,A,D,C,A,C,B,A,B,D,B,C,D,C,A,D,A,B,A,C,B,C,D)
\]
 It is immediately clear that every color class of $\mu$ has exactly
$6m$ elements, so $\mu$ is equinumerous. Moreover, no two consecutive
integers receive the same color. And they have shown that this coloring
of $\mathbb{Z}_{24}$ is rainbow free and this coloring can furthermore
be easily extend to $\mathbb{Z}_{24k}$ , for any $k\in\mathbb{N}$
by repeating block of this 24 colors $k$ consecutive times. In \citep{key-3}
the authors state their strong intution that this curtails any hope
of getting a $\mathbb{Z}_{n}$ analogue of the result for $AP(3)$
( $Conjecture.1.2$ in \citep{key-5}, which was proved by Axenovich
and Fon-Der-Flaass, $Theorem.3$ \citep{key-2}) for $AP(4)$s.

The main aim in this section is to construct a suitable example to
show that their intution may not be exactly correct. 

Let us consider $n=8$. We will show that every equinumerous $4$-coloring
of $\mathbb{Z}_{8}$ consists of a rainbow $AP(4)$.

Note that $\nu:=(A,B,C,C,D,D,A,B)$ is a rainbow free equinumerous
4-coloring of $[8]$. Any 4-tuple $(x,y,z,w)$, $x,y,z,w\in\mathbb{Z}_{8}$,
and a common difference $d\in\mathbb{Z}_{8}$, such that $y\equiv x+d(mod\ 8)$,
$z\equiv x+2d(mod\ 8)$, and $w\equiv x+3d(mod\ 8)$, with $\nu(x)$,
$\nu(y)$, $\nu(z)$, and $\nu(w)$, being pairwise distinct. Note
that any such $4$-tuple (with common difference $d$) yields $(w,z,y,x)$,
another $4$-tuple with the same property, whose (common) difference
is $8-d$. Hence, we can restrict our attention to $4$-tuples with
difference at most $\text{4}$. In any coloring in which either $4$
consecutive integers or $4$ odd numbers or $4$ even numbers recieve
$4$ distinct colors, then it is trivial that such a coloring of $\mathbb{Z}_{8}$
consist of a rainbow $AP(4)$. 

So we only need to show that the each of the other possible equinumerous
$4$-coloring of $\mathbb{Z}_{8}$ consists of a rainbow $AP(4)$.
With out loss of generality, let us assume that $\mu(1)=A$ and all
the other possible equinumerous $4$-coloring of $\mathbb{Z}_{8}$
can be constructed as given bellow: 

\[
\nu_{1}:=(A,A,B,B,C,D,C,D)
\]

\[
\nu_{2}:=(A,D,B,B,C,A,C,D)
\]

\[
\nu_{3}:=(A,D,B,D,C,B,C,A)
\]

\[
\nu_{4}:=(A,D,C,A,C,B,B,D)
\]

\[
\nu_{5}:=(A,D,C,D,C,B,B,A)
\]

\[
\nu_{6}:=(A,A,C,D,C,D,B,B)
\]

\[
\nu_{7}:=(A,D,C,D,B,B,C,A)
\]

\[
\nu_{8}:=(A,A,C,B,B,D,C,D)
\]

\[
\nu_{9}:=(A,D,C,A,B,B,C,D)
\]

Now one can observe that, for each $k\in\{1,2,\dots,9\}$, $\left\{ \overline{\nu}(i+3j)\right\} _{j=0}^{\infty}$,
$i\in[8]$ contains all the 4 colors ($A,B,C$ and $D$) in four consecutive
positions. This shows that every equinumerous 4-coloring of $\mathbb{Z}_{8}$
consists of a rainbow $AP(4)$.

So the imediate question that will arise very naturally is the following:
\begin{question}
Does there exist any rainbow-free equinumerous 4-coloring of $\mathbb{Z}_{8k}$,
$k\in\mathbb{N}\backslash\{1\}$? 
\end{question}

Another question that also comes very naturally due to Conlon et.
al. \citep{key-3} is the following:
\begin{question}
Does there exist any rainbow-free equinumerous 4-coloring of $\mathbb{Z}_{24k+r}$,
$k\in\mathbb{N}$, $r=8,16$?
\end{question}

\begin{acknowledgement}
The first author is thankful to DSKPDF (MA/20-21/0120) funded by UGC
and second author is thankful to SERB (DST) for the project CRG/2022/000264
during the tenure of which this work has been done.
\end{acknowledgement}

\end{document}